\newtheorem{theorem}{Theorem}
\newtheorem{lemma}[theorem]{Lemma}
\newtheorem{proposition}[theorem]{Proposition}
\newtheorem{cor}[theorem]{Corollary}
\renewcommand{\Re}{\operatorname{Re}}
\newcommand{\meas}[1]{\frac{1}{T}\operatorname{meas}\Bigl\{\tau\in [0,T]: #1 \Bigr\}}
\begin{document}
\title{Joint universality for Lerch zeta-functions}

\author{Yoonbok Lee}
\address{1 Department of Mathematics, 2 Research Institute of Natural Sciences, Incheon National University, Incheon, Korea}
\email{leeyb131@gmail.com, leeyb@inu.ac.kr}

\author{Takashi Nakamura}
\address{Department of Liberal Arts, Faculty of Science and Technology, Tokyo University of Science, 2641 Yamazaki, Noda-shi, Chiba-ken, 278-8510, Japan}
\email{nakamuratakashi@rs.tus.ac.jp}

\author{{\L}ukasz Pa\'nkowski}
\address{Faculty of Mathematics and Computer Science, Adam Mickiewicz University, Umultowska 87, 61-614 Pozna\'{n}, Poland, and Graduate School of Mathematics, Nagoya University, Nagoya, 464-8602, Japan}
\email{lpan@amu.edu.pl}

\thanks{The second author was partially supported by JSPS Grant no.~24740029. The third author was partially supported by (JSPS) KAKENHI grant no.~26004317 and the grant no.~2013/11/B/ST1/02799 from the National Science Centre.}

\subjclass[2010]{Primary: 11M35}
\keywords{Joint universality, Lerch zeta-functions}

\begin{abstract}
For $0<\alpha, \lambda \leq 1$, the Lerch zeta-function is defined by $L(s;\alpha, \lambda)$$:= \sum_{n=0}^\infty e^{2\pi i\lambda n} (n+\alpha)^{-s}$, where $\sigma>1$. In this paper, we prove joint universality for Lerch zeta-functions with distinct $\lambda_1,\ldots,\lambda_m$ and transcendental $\alpha$. 
\end{abstract}

\maketitle

\section{Introduction and statement of main result}
For $0 < \alpha, \lambda \le 1$, we define the Lerch zeta-function by 
\[
L(s;\alpha, \lambda) := \sum_{n=0}^\infty\frac{e(\lambda n)}{(n+\alpha)^s},\qquad \sigma>1,
\]
where $e(t) = \exp(2\pi i t)$. When $\lambda = 1$, the function $L(s;\alpha,\lambda)$ reduces to the Hurwitz zeta-function $\zeta(s,a)$. If $\lambda \ne 1$, the Lerch zeta-function $L(s;\alpha, \lambda)$ is analytically continuable to an entire function. However, the Hurwitz zeta-function $\zeta(s,a)$ is extended to a meromorphic function, which has a simple pole at $s = 1$. 

In this paper, we show the following joint universality theorem expected by Mishou \cite[Conjecture 1]{MLerch}. In order to state it, put $D:= \{s \in \mathbb{C} : 1/2<\Re s< 1\}$ and let ${\rm{meas}} \{A\}$ be the Lebesgue measure on ${\mathbb{R}}$ of the set $A$.

\begin{theorem}\label{th:main}
Suppose that $L(s;\alpha,\lambda_1),\ldots,L(s;\alpha,\lambda_m)$ are Lerch zeta-functions with distinct $\lambda_1,\ldots,\lambda_m$ and transcendental $\alpha$. For $1 \le j \le m$, let $K_j \subset D$ be compact sets with connected complements and $f_j (s)$ be continuous function on $K_j$ and analytic in the interior of $K_j$. 
Then, for every $\varepsilon>0$, we have
$$
\liminf_{T\to\infty} \meas{ \max_{1 \le j \le m}\max_{s\in K_j} 
\bigl|L(s+i\tau;\alpha,\lambda_j)-f_j(s)\bigr| <\varepsilon }>0.
$$
\end{theorem}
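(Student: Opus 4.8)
The plan is to prove the theorem by the probabilistic method of Voronin and Bagchi, carried out jointly in the $m$ coordinates. Write $H(D)$ for the Fr\'echet space of analytic functions on $D$ with the topology of uniform convergence on compacta, and $H(D)^m$ for its $m$-fold product. First I would set up the random model: let $\gamma=\{z\in\mathbb C:|z|=1\}$ and $\Omega=\prod_{n\ge 0}\gamma$, a compact abelian group with normalized Haar measure $\mathbb P$, with points written $\omega=(\omega(n))_{n\ge0}$; for $1\le j\le m$ put
\[
L(s;\alpha,\lambda_j,\omega):=\sum_{n=0}^{\infty}\frac{e(\lambda_j n)\,\omega(n)}{(n+\alpha)^{s}}.
\]
Since $\alpha$ is transcendental, the numbers $\log(n+\alpha)$ $(n\ge0)$ are linearly independent over $\mathbb Q$; as $\sum_n(n+\alpha)^{-2\sigma}<\infty$ for $\sigma>1/2$, Kolmogorov's theorem shows the series above converges in $H(D)$ for $\mathbb P$-almost all $\omega$, giving an $H(D)^m$-valued random element $\underline L(s;\alpha,\omega)=(L(s;\alpha,\lambda_j,\omega))_{1\le j\le m}$, whose law on $H(D)^m$ I call $P$.

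Next I would prove a joint limit theorem: for every Borel $A\subset H(D)^m$ with $P(\partial A)=0$,
\[
\frac1T\operatorname{meas}\{\tau\in[0,T]:\underline L(s+i\tau;\alpha)\in A\}\longrightarrow P(A)\qquad(T\to\infty).
\]
The $\mathbb Q$-linear independence of $\{\log(n+\alpha)\}$ makes the flow $\tau\mapsto((n+\alpha)^{-i\tau}\omega(n))_n$ on $\Omega$ ergodic (equivalently $\tau\mapsto((n+\alpha)^{-i\tau})_n$ is uniformly distributed in $\Omega$ by the Weyl criterion), which yields the statement for the truncated Dirichlet polynomials. To reach the analytic continuation one inserts a smoothing factor and approximates $L(s;\alpha,\lambda_j)$ and $L(s;\alpha,\lambda_j,\omega)$ in the $H(D)$-metric, on average over $\tau$ respectively over $\omega$, by finite sums, controlling the error by the mean-square bound $\int_0^T|L(\sigma+it;\alpha,\lambda_j)|^2\,dt\ll_\sigma T$ for $\sigma>1/2$ and its almost-sure analogue; a routine approximation-in-the-metric argument then transfers the limit theorem to $\underline L$ itself.

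The crux is to show $\operatorname{supp}P=H(D)^m$. Put $\underline f_n(s):=(e(\lambda_j n)(n+\alpha)^{-s})_{1\le j\le m}\in H(D)^m$, so $\underline L(\,\cdot\,;\alpha,\omega)=\sum_n\omega(n)\underline f_n$ with independent rotations $\omega(n)$ and $\sum_n\|\underline f_n\|_K^2<\infty$ on every compact $K\subset D$. By the general theory of the support of such random series in Hilbert space, with Pecherskii's rearrangement theorem as the key tool, $\operatorname{supp}P=H(D)^m$ will follow once one shows: for every tuple $(\mu_j)_{1\le j\le m}$ of compactly supported complex Borel measures on $D$ with $\varphi_{j}\not\equiv0$ for some $j$, where $\varphi_j(z):=\int e^{-sz}\,d\mu_j(s)$ is entire of exponential type and $\int(n+\alpha)^{-s}\,d\mu_j(s)=\varphi_j(\log(n+\alpha))$, one has
\[
\sum_{n=0}^{\infty}\Bigl|\sum_{j=1}^{m}e(\lambda_j n)\,\varphi_j(\log(n+\alpha))\Bigr|=\infty.
\]
Here both hypotheses enter. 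For large $n$ the factor $e(\lambda_j n)$ oscillates at the fixed rate $\lambda_j$ while $\varphi_j(\log(n+\alpha))$ varies only at the rate $O(1/n)$, so, the $\lambda_j$ being distinct modulo $1$, an Abel summation / van der Corput argument shows there is no cancellation among the $m$ terms, reducing the claim to the single-function statement $\sum_n|\varphi(\log(n+\alpha))|=\infty$ for any exponential-type $\varphi\not\equiv0$; the latter is the classical lemma of Bagchi type, exploiting the spacing of $\{\log(n+\alpha)\}$ (the $\mathbb Q$-linear independence coming from transcendence of $\alpha$) against the exponential-type growth of $\varphi$ — e.g.\ $|\varphi(\log(n+\alpha))|\gg(n+\alpha)^{-c_0}$ along a set of $n$ of positive lower density, where $c_0:=\inf_{\operatorname{supp}\mu}\Re s\in(1/2,1)$, while $\sum_n(n+\alpha)^{-c_0}=\infty$.

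Finally I would conclude as usual. By Mergelyan's theorem choose polynomials $p_j$ with $\max_{s\in K_j}|f_j(s)-p_j(s)|<\varepsilon/2$; the set $G:=\{\underline g\in H(D)^m:\max_j\max_{s\in K_j}|g_j(s)-p_j(s)|<\varepsilon/2\}$ is open, and $P(G)>0$ by the support statement. Since the limit theorem is equivalent to weak convergence of the measures $\tau\mapsto\mathbf 1\{\underline L(s+i\tau;\alpha)\in\cdot\,\}$ to $P$, the portmanteau theorem gives $\liminf_{T}\ge P(G)$ for the open set $G$, and replacing $p_j$ by $f_j$ via the triangle inequality,
\[
\liminf_{T\to\infty}\meas{\max_{1\le j\le m}\max_{s\in K_j}\bigl|L(s+i\tau;\alpha,\lambda_j)-f_j(s)\bigr|<\varepsilon}\ge P(G)>0,
\]
which is the assertion. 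The main obstacle is the support step: transcendence of $\alpha$ provides the $\mathbb Q$-linear independence needed to randomize over $\Omega$ and to run the single-function lemma, but the genuinely new difficulty is that the $m$ coordinates share the \emph{same} randomization $(\omega(n))$ and the \emph{same} base functions $(n+\alpha)^{-s}$, so they can be decoupled only through the distinctness of the $\lambda_j$; converting "distinct frequencies, hence no cancellation against one another or against the $O(1/n)$ internal oscillation" into a rigorous estimate is the heart of the proof.
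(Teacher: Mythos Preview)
Your overall architecture---random model on $\Omega$, joint limit theorem via the $\mathbb Q$-linear independence of $\{\log(n+\alpha)\}$, support $=H(D)^m$, then Mergelyan plus the portmanteau theorem---is exactly the paper's. The gap is in the support step, namely in your sketch of the divergence
\[
\sum_{n}\Bigl|\sum_{j=1}^m e(\lambda_j n)\,\varphi_j(\log(n+\alpha))\Bigr|=\infty.
\]
Saying that an Abel summation / van der Corput argument ``reduces the claim to the single-function statement'' is not a proof: when several $\varphi_j$ are simultaneously nonzero with unrelated magnitudes, you cannot peel off one term and treat the rest as a perturbation, because the remaining terms may dominate on the very ranges of $n$ where your chosen $\varphi$ is large. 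Your single-function formulation is also off: with $c_0=\inf_{\operatorname{supp}\mu}\Re s$ one has the \emph{upper} bound $|\varphi(x)|\ll e^{-c_0 x}$, so $|\varphi(\log(n+\alpha))|\gg(n+\alpha)^{-c_0}$ on a set of positive lower density would force $\varphi$ to be of maximal order there, which does not follow. The correct input is $|\varphi(x_k)|\gg e^{-\sigma_2 x_k}$ along a \emph{sequence} $x_k\to\infty$, with $\sigma_2$ an upper bound for $\Re s$ on the support.

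What the paper does instead is square. Over a short window $I_m\subset[x,x+1]$ (writing $\sum^*$ for the sum over $n$ with $\log(n+\alpha)\in I_m$) it decomposes
\[
S(x)={\sum_n}^*\Bigl|\sum_j e(\lambda_j n)\Delta_j(\log(n+\alpha))\Bigr|^2=S_1(x)+S_2(x),
\]
where $S_1={\sum_n}^*\sum_j|\Delta_j|^2$ is diagonal and $S_2$ collects the cross terms $e((\lambda_k-\lambda_l)n)\Delta_k\overline{\Delta_l}$. Distinctness of the $\lambda_j$ gives $\sum_{n\le t}e((\lambda_k-\lambda_l)n)\ll 1$, and partial summation against the slowly varying $\Delta_k\overline{\Delta_l}$ yields $S_2(x)\ll e^{-2\sigma_1 x}$; the diagonal is bounded below via $|\Delta_1(x_k)|\gg e^{-\sigma_2 x_k}$. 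The genuinely new technical device, absent from your sketch, is a lemma (quoted from the authors' companion paper) producing nested intervals $I_m\subset\cdots\subset I_1\subset[x,x+1]$ of length $\gg x^{-2m}$ on which \emph{all} the $|\Delta_j|$ are simultaneously pinned between $\tfrac12|\Delta_j(x_0^{(j)})|$ and $|\Delta_j(x_0^{(j)})|$ up to negligible error. This simultaneous two-sided control is precisely what allows the comparison $S(x)\ll\bigl({\sum_n}^*|\sum_j\cdots|\bigr)\cdot\sum_j|\Delta_j(x_0^{(j)})|$ and then division by $\sum_j|\Delta_j(x_0^{(j)})|$ to recover the first-power divergence. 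In short, the orthogonality of the characters $e(\lambda_j n)$ is exploited through a second moment on carefully chosen windows, not through a direct decoupling of the first-power sum; your sketch is missing both the squaring and the simultaneous-size control of the $\Delta_j$.
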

Roughly speaking, this theorem implies that any analytic functions can be simultaneously and uniformly approximated by Lerch zeta-functions with distinct $\lambda_1,\ldots,\lambda_m$. The proof will be written in Sections 2 and 3. We skip the detail of the proofs of results appeared in Section 2 since they do not contain essentially new ideas. In Section 3, we prove the denseness lemma using an orthogonality of Dirichlet coefficients of the zeta-functions. The main idea of our proof was recently observed in \cite{LeeNakPanS} by the authors. However, in the present paper we adopt this approach to completely different kind of zeta-functions without Euler product. It proves the conjecture on joint universality for Lerch zeta-functions put forward by Mishou in \cite{MLerch} and shows that this idea can be applicable to many collections of zeta and $L$-functions, which independence relies on some orthogonality property of their coefficients.

Now we look back in the history of the joint universality for Lerch zeta-functions. Laurin\v{c}ikas showed Theorem \ref{th:main} with $m=1$ in \cite[Theorem]{Lau97} (see also \cite[Theorem 6.1.1]{LauGa}). Laurin\v{c}ikas and Matsumoto proved Theorem \ref{th:main} with the condition that $\lambda_j = k_j / l_j$ are distinct rational numbers satisfying $(k_j,l_j)=1$ and $0 < k_j \le l_j$ in \cite[Theorem 1]{LauMa} (see also \cite[Theorem 6.3.1]{LauGa} or \cite[Theorem 2]{MLerch}). In \cite[Theorem 17]{Na07}, Nakamura obtained the joint universality of the Lerch zeta-functions with $\lambda_j = \lambda + k_j / l_j$, where $0 < \lambda \le 1$ and $\lambda_j$ are distinct in $\rm{mod} \,\, 1$. We have to mention that the method in the both papers \cite[Theorem 1]{LauMa} and \cite[Theorem 17]{Na07} are based on the observation that if $e(\lambda_ln) \ne e(\lambda_kn)$, there exists $M \in \mathbb{N}$ such that for all $1 \le k \ne l \le m$,
$$
|e(\lambda_ln) - e(\lambda_kn)| = |1 - e((\lambda_l-\lambda_k)n)| \ge |1-e(1/M)|. 
$$ 
Recently, Mishou proved in \cite[Theorem 4]{MLerch}, the joint universality of the Lerch zeta-functions for almost all real numbers $\lambda_j$, $1 \le j  \le m$ such that $1, \lambda_1,\ldots ,\lambda_m$ are linearly independent over ${\mathbb{Q}}$. His proof is based on some results of discrepancy estimate from uniform distribution theory (see \cite[Section 2]{MLerch}). Obviously, Theorem \ref{th:main} of the present paper is not only an improvement of Mishou's result \cite[Theorem 4]{MLerch} but also the final answer to \cite[Conjecture 1]{MLerch}. 

By using Theorem \ref{th:main}, we get the following corollaries. We omit their proofs since they follow from the standard argument (see for example \cite[Section 7.2]{LauGa}).
\begin{cor}\label{cor:1}
Let $\alpha\in (0,1]$ be transcendental and $\lambda_1,\ldots,\lambda_m\in (0,1]$ be distinct real numbers. For $N\in\mathbb{N}$ and $1/2 < \sigma <1$, define the mapping $h \colon \mathbb{R} \to \mathbb{C}^{mN}$ by the formula
\begin{align*}
h(t) := \bigl( & L(\sigma+i\tau;\alpha,\lambda_1), L'(\sigma+i\tau;\alpha,\lambda_1), \ldots, 
L^{(N-1)}(\sigma+i\tau;\alpha,\lambda_1), \\ & ,\ldots, L(\sigma+i\tau;\alpha,\lambda_m), L'(\sigma+i\tau;\alpha,\lambda_m), \ldots, L^{(N-1)}(\sigma+i\tau;\alpha,\lambda_m) \bigr) .
\end{align*}
Then the image of $\mathbb{R}$ is dense in $\mathbb{C}^{mN}$. 
\end{cor}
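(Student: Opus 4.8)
The plan is to deduce Corollary~\ref{cor:1} from Theorem~\ref{th:main} by the classical combination of a polynomial approximation with Cauchy's estimates for derivatives; since this is entirely standard I only outline it. Fix a target vector $(a_{j,k})_{1\le j\le m,\,0\le k\le N-1}\in\mathbb{C}^{mN}$ and an arbitrary $\delta>0$; it suffices to produce $\tau\in\mathbb{R}$ with $\bigl|L^{(k)}(\sigma+i\tau;\alpha,\lambda_j)-a_{j,k}\bigr|<\delta$ for all $j$ and $k$.

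First I would fix a radius $r$ with $0<r<\min\{\sigma-\tfrac12,\,1-\sigma\}$, which is possible precisely because $1/2<\sigma<1$. Then the closed disc $K:=\{s\in\mathbb{C}:|s-\sigma|\le r\}$ satisfies $K\subset D$ and has connected complement, and we may use this single $K$ for every index $j$. For $1\le j\le m$ put
\[
f_j(s):=\sum_{k=0}^{N-1}\frac{a_{j,k}}{k!}\,(s-\sigma)^{k},
\]
which is a polynomial, hence continuous on $K$ and analytic in its interior, and satisfies $f_j^{(k)}(\sigma)=a_{j,k}$ for $0\le k\le N-1$.

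Next I would apply Theorem~\ref{th:main} to the data $K_1=\cdots=K_m=K$ and $f_1,\dots,f_m$ with a parameter $\varepsilon>0$ still to be chosen. It yields a set of $\tau$ of positive lower density, in particular a nonempty set, on which
\[
\max_{1\le j\le m}\max_{s\in K}\bigl|L(s+i\tau;\alpha,\lambda_j)-f_j(s)\bigr|<\varepsilon .
\]
Fix such a $\tau$. For each $j$ the function $g_j(s):=L(s+i\tau;\alpha,\lambda_j)-f_j(s)$ is analytic in a neighbourhood of $K$ (note that for $\lambda_j=1$ the only pole of $L(\,\cdot\,;\alpha,1)$ lies at $s=1\notin D$), so Cauchy's formula for the $k$-th derivative on the circle $|s-\sigma|=r$ gives
\[
\bigl|L^{(k)}(\sigma+i\tau;\alpha,\lambda_j)-a_{j,k}\bigr|
=\bigl|g_j^{(k)}(\sigma)\bigr|\le\frac{k!}{r^{k}}\,\varepsilon\le C\varepsilon,
\qquad C:=\max_{0\le k\le N-1}\frac{k!}{r^{k}},
\]
uniformly in $1\le j\le m$ and $0\le k\le N-1$. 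Choosing $\varepsilon<\delta/C$ shows that the vector $h(\tau)$ lies within distance $\delta$ of the prescribed point in the sup-norm, hence within a dimensional constant times $\delta$ in the Euclidean norm. As $\delta$ and the target vector were arbitrary, the image $h(\mathbb{R})$ is dense in $\mathbb{C}^{mN}$.

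The argument contains no real obstacle: the only points requiring a moment's care are that the disc $K$ must be shrunk so as to sit strictly inside the strip $1/2<\Re s<1$ — which is exactly where the hypothesis $1/2<\sigma<1$ enters — and that one should use the same compact set for all $\lambda_j$, so that a single invocation of Theorem~\ref{th:main} approximates all $m$ Lerch functions at once. Everything else is the standard passage from uniform approximation of analytic functions to the approximation of finitely many of their Taylor coefficients.
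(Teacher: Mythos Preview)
Your argument is correct and is precisely the standard deduction the paper alludes to (and omits) when it refers to \cite[Section 7.2]{LauGa}: approximate each $L(s+i\tau;\alpha,\lambda_j)$ uniformly on a small disc by the Taylor polynomial with the prescribed coefficients via Theorem~\ref{th:main}, and convert this into closeness of the first $N-1$ derivatives at the centre by Cauchy's estimates. There is nothing to add.
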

\begin{cor}\label{cor:2}
Let $\alpha\in (0,1]$ be transcendental and $\lambda_1,\ldots,\lambda_m\in (0,1]$ be distinct real numbers. Suppose $N\in\mathbb{N}$ and $F_l$, $0 \le l \le k$ are continuous functions on $\mathbb{C}^{mN}$ and satisfy
\begin{align*}
\sum_{l=0}^k s^l F_l \bigl( & L(s;\alpha,\lambda_1), L'(s;\alpha,\lambda_1), \ldots, 
L^{(N-1)}(s;\alpha,\lambda_1) , \\ & ,\ldots, L(s;\alpha,\lambda_m), L'(s;\alpha,\lambda_m), \ldots, 
L^{(N-1)}(s;\alpha,\lambda_m) \bigr) \equiv 0.
\end{align*}
Then we have $F_l \equiv 0$ for $0 \le l \le k$. 
\end{cor}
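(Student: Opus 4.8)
The plan is to derive Corollary~\ref{cor:2} from the denseness statement of Corollary~\ref{cor:1} by the classical Voronin-type functional independence argument (cf.\ \cite[Section~7.2]{LauGa}), proceeding by induction on $k$. Throughout fix a real number $\sigma\in(1/2,1)$; since the $F_l$ are merely continuous the composite $\sum_{l}s^lF_l(\cdots)$ need not be analytic, so the hypothesis is understood as an identity on a region inside $D$, and it suffices to use it along the vertical line $\Re s=\sigma$, on which I abbreviate by $h(\sigma+i\tau)=h(\tau)$ the vector of values appearing as the argument of the $F_l$, in the notation of Corollary~\ref{cor:1}. For the base case $k=0$ the hypothesis reads $F_0(h(\tau))=0$ for all $\tau\in\mathbb{R}$; since the image of $\mathbb{R}$ under $h$ is dense in $\mathbb{C}^{mN}$ by Corollary~\ref{cor:1} and $F_0$ is continuous, this forces $F_0\equiv 0$.

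For the inductive step, assume the conclusion is known with $k$ replaced by $k-1$, and suppose $\sum_{l=0}^k s^l F_l(h(\tau))=0$ for all $\tau$, with $s=\sigma+i\tau$. Fix an arbitrary point $\underline a\in\mathbb{C}^{mN}$. Then one can choose a sequence $\tau_n\to+\infty$ with $h(\tau_n)\to\underline a$: indeed Corollary~\ref{cor:1} is itself obtained from the positive-lower-density conclusion of Theorem~\ref{th:main} (applying it with each $f_j$ a Taylor polynomial of degree $N-1$ having the prescribed derivatives at $\sigma$, and using Cauchy's estimates), so for each $\delta>0$ the set of $\tau\in[0,T]$ with $|h(\tau)-\underline a|<\delta$ has positive lower density and is in particular unbounded, and a diagonal choice produces the desired $\tau_n$. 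Put $s_n:=\sigma+i\tau_n$, so $|s_n|\to\infty$. Dividing the identity $\sum_{l=0}^k s_n^l F_l(h(\tau_n))=0$ by $s_n^k$ gives $\sum_{l=0}^k (s_n^{l}/s_n^{k})\,F_l(h(\tau_n))=0$. As $n\to\infty$ we have $s_n^{l}/s_n^{k}\to 0$ for $0\le l<k$ while $s_n^{k}/s_n^{k}=1$, and each $F_l(h(\tau_n))\to F_l(\underline a)$ by continuity (hence stays bounded); therefore $F_k(\underline a)=0$. Since $\underline a$ was arbitrary, $F_k\equiv 0$. Substituting this back leaves $\sum_{l=0}^{k-1} s^l F_l(h(\tau))=0$ for all $\tau$, and the inductive hypothesis yields $F_0\equiv\cdots\equiv F_{k-1}\equiv 0$, completing the induction.

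The one step that is not purely formal is the extraction of a sequence $\tau_n\to+\infty$ with $h(\tau_n)\to\underline a$: this needs the quantitative (positive measure) form of Theorem~\ref{th:main} underlying Corollary~\ref{cor:1}, not merely the denseness of the image of $h$ as literally stated, because the dilation factors $s_n^{l}/s_n^{k}$ must be driven to $0$ in order to isolate $F_k$. Everything else is a routine limiting argument, which is why the proof is omitted in the main text.
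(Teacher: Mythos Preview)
Your argument is correct and is precisely the ``standard argument'' the paper defers to (see \cite[Section~7.2]{LauGa}): induction on $k$, with the leading coefficient $F_k$ isolated by choosing $\tau_n\to\infty$ with $h(\tau_n)\to\underline a$ and dividing by $s_n^k$. Your observation that one needs the positive-lower-density form of Theorem~\ref{th:main} (not merely the denseness of Corollary~\ref{cor:1} as stated) to guarantee such a sequence with $|\tau_n|\to\infty$ is exactly the point that makes the argument work, and is implicit in the cited reference.
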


\section{Proof of Theorem \ref{th:main}}
Recall that $D:=\{s\in\mathbb{C}:1/2<\Re s< 1\}$ and denote by $H(D)$ the space of analytic function on $D$ equipped with the topology of uniform convergence on compacta. Let ${\mathfrak{B}}(X)$ stand for the class of Borel sets of the space $X$. Define by $\gamma$ the unit circle on ${\mathbb{C}}$, and let $\Omega := \prod_{n=0}^{\infty} \gamma_n$, where $\gamma_n = \gamma$ for all $n \in  {\mathbb{N}}_0$. Denoting by $m_{H}$ the probability Haar measure on $(\Omega , {\mathfrak{B}}(\Omega))$, we obtain a probability space $(\Omega , {\mathfrak{B}}(\Omega) , m_{H})$. For $\sigma >1$, we define 
\[
L(s;\alpha, \lambda ; \omega) := 
\sum_{n=0}^\infty\frac{e(\lambda n) \omega (n)}{(n+\alpha)^s},\qquad \omega (n) \in \gamma. 
\]
Note that for almost all $\omega \in \Omega$ the series above converges uniformly on compact subsets of $D$ (see for instance \cite[Lemma 5.2.1]{LauGa}).

Let $H(D)^m:= H(D) \times \cdots \times H(D)$. We define a probability measure $P_T$ on $(H(D)^m, {\mathfrak{B}}(H(D)^m))$ by
$$
P_T (A) := \meas{ \bigl( L(s+i\tau;\alpha,\lambda_1), \ldots, L(s+i\tau;\alpha,\lambda_m)
\bigr) \in A }, 
$$
where $A \in {\mathfrak{B}}(H(D)^m)$. Next define the $H(D)^m$-valued random element $\underline{L}(s;\omega)$ by
$$
\underline{L}(s;\omega) := \bigl( L(s;\alpha, \lambda_1 ; \omega), \ldots ,
L(s;\alpha, \lambda_m ; \omega) \bigr).
$$
Denote by $P_{\underline{L}}$ the distribution of the random element $\underline{L}(s;\omega)$, namely, 
$$
P_{\underline{L}} (A) := m_H \bigl\{ \omega \in \Omega : \underline{L}(s;\omega) \in A \bigr\}, \qquad
A \in {\mathfrak{B}}(H(D)^m). 
$$
Then we have the following limit theorem proved by Matsumoto and Laurin\v{c}ikas \cite{LauMa} (see also \cite[Theorem 5.3.1]{LauGa} or \cite[Section 5]{MLerch}). 
\begin{proposition}[{\cite[Lemma 1]{LauMa}}]\label{pro:lim1}
Let $0<\alpha <1$ be transcendental. Then the probability measure $P_T$ converges weakly to $P_{\underline{L}}$ as $T \to \infty$. 
\end{proposition}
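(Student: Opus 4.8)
The plan is to follow Bagchi's classical scheme for probabilistic limit theorems in spaces of analytic functions, in the form carried out for Lerch zeta-functions by Laurin\v{c}ikas and Matsumoto; since this step introduces no new idea I will only outline the skeleton. The first ingredient is a limit theorem on the torus: for $\tau\in\mathbb{R}$ put $\omega_\tau:=\bigl((n+\alpha)^{-i\tau}\bigr)_{n\ge 0}\in\Omega$ and let $Q_T$ be the distribution of $\omega_\tau$ when $\tau$ is uniformly distributed on $[0,T]$. This is where transcendence of $\alpha$ is used: the real numbers $\log(n+\alpha)$, $n\in\mathbb{N}_0$, are then linearly independent over $\mathbb{Q}$, since a nontrivial rational relation, after clearing denominators, would give an identity $\prod_{j}(n_j+\alpha)^{a_j}=1$ with $a_j\in\mathbb{Z}$ not all zero, and separating the positive from the negative exponents turns this into an equality of two polynomials in $\alpha$ with distinct linear factors, which is impossible by unique factorization in $\mathbb{Q}[X]$ unless all $a_j=0$ or $\alpha$ is algebraic. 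Granting this, the Weyl criterion for equidistribution on $\Omega$ shows that $\{\omega_\tau\}_{\tau}$ is uniformly distributed, so $Q_T$ converges weakly to the Haar measure $m_H$.

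Next I would replace the Dirichlet series by absolutely convergent surrogates. Fixing $\theta>0$ and $v_k(n):=\exp\bigl(-((n+\alpha)/k)^{\theta}\bigr)$, set
\[
L_k(s;\alpha,\lambda):=\sum_{n=0}^{\infty}\frac{e(\lambda n)v_k(n)}{(n+\alpha)^{s}},\qquad
L_k(s;\alpha,\lambda;\omega):=\sum_{n=0}^{\infty}\frac{e(\lambda n)\omega(n)v_k(n)}{(n+\alpha)^{s}}.
\]
Both series converge absolutely and uniformly on compact subsets of $D$. Write $\underline{L}_k(s+i\tau)$ and $\underline{L}_k(s;\omega)$ for the corresponding $m$-tuples in $H(D)^m$. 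The map $\omega\mapsto\underline{L}_k(s;\omega)$ is continuous from $\Omega$ to $H(D)^m$, so the torus limit theorem together with the continuous mapping theorem shows that the distribution $P_{T,k}$ of $\underline{L}_k(s+i\tau)$ converges weakly, as $T\to\infty$, to the distribution $\widehat{P}_k$ of $\underline{L}_k(s;\omega)$.

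Then I would pass to the limit $k\to\infty$. A Mellin-transform representation $v_k(n)=\frac{1}{2\pi i}\int_{(a)}\frac{1}{\theta}\Gamma(z/\theta)k^{z}(n+\alpha)^{-z}\,dz$ turns $L_k(s;\alpha,\lambda)$ into $\frac{1}{2\pi i}\int_{(a)}\frac{1}{\theta}\Gamma(z/\theta)k^{z}L(s+z;\alpha,\lambda)\,dz$; shifting the contour past $z=0$ into the region $\Re(s+z)>1/2$ recovers $L(s;\alpha,\lambda)$ plus, only in the Hurwitz case $\lambda=1$, a residue at $z=1-s$ of modulus $\asymp k^{1-\Re s}$ that nonetheless decays exponentially in $\Im s$ and is therefore killed by averaging over $\tau\in[0,T]$ and letting $T\to\infty$ before $k\to\infty$. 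Using in addition the classical mean-square bound $\frac1T\int_0^T|L(\sigma+i\tau;\alpha,\lambda)|^2\,d\tau\ll_{\sigma}1$ for $1/2<\sigma<1$, one gets, for a metric $\rho$ inducing the topology of $H(D)^m$,
\[
\lim_{k\to\infty}\ \limsup_{T\to\infty}\ \frac1T\int_0^T\rho\bigl(\underline{L}(s+i\tau),\underline{L}_k(s+i\tau)\bigr)\,d\tau=0 .
\]
On the probabilistic side, the almost sure convergence of the random series noted above, together with an Abel-summation argument, gives $\underline{L}_k(s;\omega)\to\underline{L}(s;\omega)$ in $H(D)^m$ for $m_H$-almost every $\omega$, so $\widehat{P}_k$ converges weakly to $P_{\underline{L}}$. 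Feeding these three weak-convergence statements into the standard approximation lemma for convergence of probability measures (see, e.g., \cite{LauGa}) then gives $P_T\Rightarrow P_{\underline{L}}$.

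The one genuinely analytic point, and hence the main obstacle, is the estimate in the third step: one must bound the tail of $L(s;\alpha,\lambda)$ uniformly in $\tau$ near the line $\Re s=1/2$, which requires the analytic continuation of the Lerch zeta-function and a second-moment estimate in the strip, together with the separate handling of the pole at $s=1$ in the Hurwitz case. The remaining ingredients — equidistribution on $\Omega$, continuity of the finite models, and the gluing of the limits — are soft. Since the statement coincides with \cite[Lemma 1]{LauMa}, in the write-up I would simply refer there for the remaining computations, as the excerpt does.
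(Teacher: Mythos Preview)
Your outline is correct and follows exactly the Bagchi-type scheme carried out in \cite{LauMa} and \cite[Chapter~5]{LauGa}, which is all the paper does here: it states the proposition and cites those references without reproducing any details. Since your write-up ends with the same citation, there is nothing to add.
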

The proof of the next lemma shall be written in Section 3 since it contains the most novel part of the present paper.
\begin{lemma}\label{lem:den1}
The set $\{ \underline{L}(s;\omega) : \omega \in \Omega \}$ is dense in $H(D)^m$. 
\end{lemma}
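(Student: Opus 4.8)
The plan is to reduce the denseness of $\{\underline{L}(s;\omega):\omega\in\Omega\}$ in $H(D)^m$ to a statement about the joint range of certain series of independent random vectors, and then to invoke a Pecherski--type rearrangement theorem in Hilbert spaces. First I would fix compact sets $K_j\subset D$ with connected complements and, via Mergelyan's theorem, note that it suffices to approximate arbitrary targets $(g_1,\dots,g_m)$ where each $g_j$ is a polynomial; equivalently, by the standard functional-analytic duality in $H(D)$ (Hahn--Banach plus the Riesz representation of continuous linear functionals on $H(D)$ as integration against compactly supported complex measures), denseness will follow once we show that the only way to have
\[
\sum_{n=0}^\infty \frac{e(\lambda_j n)\,\omega(n)}{(n+\alpha)^s}
\]
expressible over all $\omega$ only in a proper closed subspace is the trivial one. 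Concretely, writing $\mu_1,\dots,\mu_m$ for finite complex measures on $K_1,\dots,K_m$ and setting $\varphi_n := \bigl(\int_{K_j} (n+\alpha)^{-s}\,d\mu_j(s)\bigr)_{j=1}^m \in \mathbb{C}^m$, the task is to show: if $\sum_n z_n\varphi_n$ with $z_n = e(\lambda_j n)\omega(n)$ (coordinate $j$) can be made to miss some point for every choice of $|\omega(n)|=1$, then already $\sum_n |\varphi_n| <\infty$ forces $\int_{K_j} p(s)(n+\alpha)^{-s}d\mu_j = 0$ for all $n$ and all polynomials, hence $\mu_j\equiv 0$.

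The key analytic input, and the place where transcendence of $\alpha$ and distinctness of the $\lambda_j$ enter, is an orthogonality relation among the Dirichlet coefficients. The second step is therefore to establish that the functions $n\mapsto e(\lambda_j n)$, $1\le j\le m$, together with the growth $(n+\alpha)^{-s}$ are ``independent enough'': precisely, one shows $\sum_n \|\varphi_n\| = \infty$ unless all $\mu_j$ are trivial, using that $\sum_{n\le N} e((\lambda_j-\lambda_k)n)=O(1)$ for $j\ne k$ (geometric series, since $\lambda_j-\lambda_k\notin\mathbb{Z}$) to decouple the $m$ coordinates, and using the transcendence of $\alpha$ — via a Bergström/Baker-type linear-independence argument over $\mathbb{Q}$ of the numbers $\log(n+\alpha)$, or more elementarily the fact that $(n+\alpha)^{-s}$ for $n\ge0$ are linearly independent in $H(D)$ because $\alpha$ is transcendental — to conclude that $\int_{K_j}(n+\alpha)^{-s}d\mu_j$ cannot decay too fast in $n$ unless it vanishes identically. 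The divergence $\sum_n\|\varphi_n\|=\infty$, combined with $\varphi_n\to 0$, is exactly the hypothesis needed to run the rearrangement/Steinitz-type theorem.

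The third step is the probabilistic core: having $\sum_n\|\varphi_n\|^2<\infty$ (from $\varphi_n\to0$ plus Cauchy--Schwarz against the convergent $\sum (n+\alpha)^{-2\sigma}$ on a fixed right half of $D$) and $\sum_n\|\varphi_n\|=\infty$, the series $\sum_n e(\lambda_j n)\omega(n)\varphi_n^{(j)}$ — viewed as a series of independent $\mathbb{C}^m$-valued random variables as $\omega$ ranges over $\Omega$ with Haar measure — has, by the theorem of Pecherski (or the classical result going back to Steinitz in the finite-dimensional and to Hildebrandt/Kadец in the infinite-dimensional setting) on the set of sums of a rearranged/sign-modified series, support equal to all of $\mathbb{C}^m$, hence its closure is everything; in particular the range over $\omega$ is dense, contradicting the assumption that it avoids a point. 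Unwinding the duality then yields Lemma~\ref{lem:den1}.

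The main obstacle I anticipate is the second step — proving the orthogonality/independence of the coefficient vectors $\varphi_n$ sharply enough to force $\sum_n\|\varphi_n\|=\infty$. The difficulty is genuinely arithmetic: one must leverage simultaneously the transcendence of $\alpha$ (to kill any $\mathbb{Q}$-linear relations that could make $\int_{K_j}(n+\alpha)^{-s}d\mu_j$ vanish or decay artificially) and the fact that $\lambda_1,\dots,\lambda_m$ are merely distinct modulo $1$ — with no independence assumption over $\mathbb{Q}$, which is precisely what makes this stronger than Mishou's theorem. Handling possible resonances among the $e(\lambda_j n)$ when, say, some $\lambda_j-\lambda_k$ is rational (so the exponentials are periodic and only ``orthogonal on average'' rather than termwise) is the delicate point, and I expect it to require grouping the indices $n$ into arithmetic progressions modulo the common denominator and running the argument inside each progression.
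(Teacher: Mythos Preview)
Your framework matches the paper's: embed into a Hilbert space (the paper uses the Bergman space ${\mathbb{B}}^m=B^2(U)^m$), apply the Pecherski-type theorem quoted there as \cite[Theorem~6.1.16]{S}, and reduce denseness to showing $\sum_n|\langle\underline{v_n},\underline{g}\rangle|=\infty$ for every nonzero $\underline{g}$. But two points in your plan would fail as written. First, the quantity that must diverge is
\[
\sum_n\Bigl|\sum_{j=1}^m e(\lambda_j n)\,\Delta_j(\log(n+\alpha))\Bigr|,
\]
with the phases \emph{inside} the absolute value; this is $\sum_n|\langle\underline{v_n},\underline{g}\rangle|$, not your $\sum_n\|\varphi_n\|$, and the entire difficulty is the possible cancellation among the $m$ terms. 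Your Hahn--Banach framing does not help: the set $\{\underline{L}(s;\omega):\omega\in\Omega\}$ is not convex, so surjectivity after pairing with individual measures does not give denseness in $H(D)^m$ --- one must verify the Pecherski hypothesis in the full Hilbert space. Second, the divergence is not obtained by splitting into arithmetic progressions. The paper runs a second-moment argument over a short range ${\sum_n}^*$ of $n$: expand ${\sum_n}^*|\sum_j e(\lambda_j n)\Delta_j|^2$, bound the diagonal ${\sum_n}^*\sum_j|\Delta_j|^2$ from below (via a Kaczorowski--Kulas estimate $|\Delta_1(x_k)|\gg e^{-\sigma_2 x_k}$ along a sequence, together with a lemma from \cite{LeeNakPanS} pinning each $|\Delta_j(t)|$ to nearly constant size on a short interval), and bound the off-diagonal $\sum_{k\ne l}{\sum_n}^* e((\lambda_k-\lambda_l)n)\Delta_k\overline{\Delta_l}$ by $O(e^{-2\sigma_1 x})$ via partial summation against $\sum_{n\le t}e((\lambda_k-\lambda_l)n)\ll 1$. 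This bound is uniform whether $\lambda_k-\lambda_l$ is rational or irrational, so no progression-splitting is needed; one then recovers a first-moment lower bound by dividing through by $\sum_j|\Delta_j(x_0^{(j)})|$.

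One further correction: the transcendence of $\alpha$ plays \emph{no role} in this lemma. It enters only in the limit theorem (Proposition~\ref{pro:lim1}), where $\mathbb{Q}$-linear independence of the $\log(n+\alpha)$ is needed for equidistribution of $\tau\mapsto((n+\alpha)^{-i\tau})_n$ on $\Omega$. The divergence proof uses nothing about $\alpha$ beyond $\alpha>0$; the lower bound on $|\Delta_1|$ comes from complex-analytic growth properties of the entire function $\Delta_1$, not from any arithmetic of $\alpha$.
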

Recall that the minimal closed set $S_{\bf{P}} \subset X$ such that ${\bf{P}}(S_{\bf{P}}) = 1$ is called the support of a probability space $(X,{\mathfrak{B}}(X),{\bf{P}})$. The set $S_{\bf{P}}$ consists of all $x \in S$ such that for every neighborhood $V$ of $x$ the inequality ${\bf{P}} (V) > 0$ is satisfied. From Lemma \ref{lem:den1} and \cite[Lemma 6.1.3]{LauGa} or \cite[Lemma 12.7]{S}, the support of the probability measure $P_{\underline{L}}$ is $H(D)^m$. First assume that $h_1(s), \ldots ,h_m(s) \in H(D)$ are polynomials. Let $K_j$  be the same as in Theorem \ref{th:main} and $\Phi$ be the set of functions $\underline{\varphi} \in H(D)^m$ which satisfy
$$
\max_{1 \le j \le m} \max_{s \in K_j} \bigl| \varphi_j (s) - h_j (s)\bigr| < \varepsilon.
$$
From Proposition \ref{pro:lim1}, the definition of support, Portmanteau theorem (see for instance \cite[Theorem 3.1]{S}) and the fact that the support of $P_{\underline{L}}$ is $H(D)^m$, we have
$$
\liminf_{T \to \infty} P_T (\Phi) \ge P_{\underline{L}} (\Phi) >0.
$$
Therefore, we obtain 
$$
\liminf_{T\to\infty} \meas{ \max_{1 \le j \le m}\max_{s\in K_j} 
\bigl|L(s+i\tau;\alpha,\lambda_j)-h_j(s)\bigr| <\varepsilon }>0.
$$
Hence it suffices to show that polynomials $h_j(s)$ can be replaced by $f_j(s)$ appeared in Theorem \ref{th:main}. It is possible by Mergelyan's theorem which implies that any function $f(s)$ which is continuous on $K$ and analytic in the interior of $K$, where $K$ is a compact subset with connected complement, is uniformly approximative on $K$ by polynomials. Hence we omit the details since this is easily done by the well-known method (see for example \cite[p.~129]{LauGa} or \cite[p.~1125]{MLerch}).
 
\section{Proof of Lemma \ref{lem:den1}}

Let $U$ be a simply connected smooth Jordan domain such that $\overline{U}\subset D$. Let $B^2(U)$ be the Bergman space of all holomorphic square integrable complex functions with respect to the Lebesgue measure on $U$ with the inner product
\[
\langle f,g\rangle = \iint_U f(s)\overline{g(s)} d\sigma dt, \qquad f,g \in H(U).
\]
The properties below are well-known (see for instance \cite{Que}).
\begin{lemma}[{\cite[Proposition 7.2.2 and Theorem 7.2.3]{Que}}]\label{lem:Ber}
We have the following.\\
$(a)$ Convergence in $B^2 (U)$ implies local uniform convergence on $U$.\\
$(b)$ $B^2(U)$ is a Hilbert space.\\
$(c)$ The set of polynomials is dense in $B^2(U)$. 
\end{lemma}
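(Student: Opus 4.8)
\textbf{Plan for Lemma \ref{lem:Ber}.} These are standard facts about Bergman spaces, so the plan is to recall the short classical arguments. I would prove (a), then deduce (b), and finally establish (c) by a two-stage approximation.

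\emph{Part (a).} The key point is the mean value property together with the Cauchy--Schwarz inequality. Fix a compact set $K\subset U$ and let $\rho:=\operatorname{dist}(K,\partial U)>0$. For any $s_0\in K$ and any $r<\rho$, the mean value property over the disc $D(s_0,r)$ gives $f(s_0)=\frac{1}{\pi r^2}\iint_{D(s_0,r)}f(s)\,d\sigma\,dt$ for every $f\in H(U)$. Averaging over $r\in(0,\rho)$ (or simply using the disc of radius $\rho/2$) and applying Cauchy--Schwarz yields a bound of the shape $|f(s_0)|\le C(K)\,\|f\|_{B^2(U)}$ with $C(K)$ depending only on $K$ and $U$, not on $f$. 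Applying this to differences $f_n-f$ shows that $B^2$-convergence forces uniform convergence on $K$; since $K$ was arbitrary, this is local uniform convergence on $U$.

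\emph{Part (b).} $B^2(U)$ is clearly an inner product space, so it remains to check completeness. Given a Cauchy sequence $(f_n)$ in $B^2(U)$, it is in particular Cauchy in $L^2(U,d\sigma dt)$, which is complete, so $f_n\to f$ in $L^2$ for some $f\in L^2(U)$. By Part (a) (applied to the Cauchy sequence, or rather to the fact that $L^2$-Cauchy plus holomorphy gives locally uniform Cauchy), $(f_n)$ converges locally uniformly on $U$ to some holomorphic function $g$; the locally uniform limit of holomorphic functions is holomorphic, and the $L^2$ and locally uniform limits must agree a.e., so $f=g\in H(U)$ and $f\in B^2(U)$. Hence $f_n\to f$ in $B^2(U)$ and the space is complete.

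\emph{Part (c).} This is where a genuine (though classical) argument is needed; I expect it to be the main obstacle, and it relies on $U$ being a simply connected Jordan domain. First, approximate $f\in B^2(U)$ by a function holomorphic on a neighborhood of $\overline U$: using that $\partial U$ is a Jordan curve, pick a slightly larger simply connected Jordan domain $U\subset U_\varepsilon$, or more simply use a Carath\'eodory-type dilation. Concretely, by the Riemann mapping theorem let $\phi\colon \mathbb{D}\to U$ be conformal; by Carath\'eodory's theorem (here the Jordan condition is used) $\phi$ extends to a homeomorphism $\overline{\mathbb{D}}\to\overline U$. For $r<1$ set $f_r(s):=f(\phi(r\phi^{-1}(s)))$, which is holomorphic on a neighborhood of $\overline U$; a change of variables and dominated convergence show $f_r\to f$ in $B^2(U)$ as $r\to 1^-$. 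Second, a function $g$ holomorphic on a neighborhood of $\overline U$ is the uniform limit on $\overline U$ of the partial sums of its Taylor series expanded at a point of $U$ when $U$ is a disc; for general simply connected $U$ one instead invokes Runge's theorem (the complement of $U$ in $\widehat{\mathbb{C}}$ is connected, since $U$ is simply connected), which gives polynomials $p_k\to g$ uniformly on $\overline U$, hence in $B^2(U)$ since $U$ has finite area. Combining the two stages and a diagonal argument shows the polynomials are dense in $B^2(U)$, completing the proof.
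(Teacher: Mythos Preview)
The paper does not give its own proof of this lemma: it simply records these three facts and cites \cite[Proposition 7.2.2 and Theorem 7.2.3]{Que} for them. So there is nothing to compare your argument against on the paper's side; you are supplying details where the paper outsources them.

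Your arguments for (a) and (b) are the standard ones and are fine. In (c) there is a small technical slip worth flagging: you assert that $f_r(s)=f(\phi(r\phi^{-1}(s)))$ is holomorphic on a \emph{neighborhood} of $\overline{U}$, but Carath\'eodory only gives you that $\phi^{-1}$ extends continuously to $\overline{U}$, not holomorphically across $\partial U$ (smoothness of the Jordan curve yields a $C^\infty$ extension of $\phi^{-1}$ to the closure by Kellogg--Warschawski, but not a holomorphic one unless the boundary is real-analytic). So as written $f_r$ is only guaranteed to be continuous on $\overline{U}$ and holomorphic in $U$. This is harmless for your conclusion: either invoke Mergelyan's theorem (which the paper itself uses later) in place of Runge, or note that $f_r$ is uniformly approximable on $\overline{U}$ by functions holomorphic in a neighborhood via a further dilation, and then apply Runge. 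With that adjustment your outline for (c) goes through.
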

Now let ${\mathbb{B}}^m := B^2(U) \times \cdots \times B^2(U)$ is the Hilbert space with the inner product given, for $\underline{f} = (f_1,\ldots,f_m) \in H(U)^m$ and $\underline{g} = (g_1,\ldots,g_m) \in H(U)^m$ by
\[
\bigl\langle \underline{f},\underline{g} \bigr\rangle = 
\sum_{j=1}^m\iint_U f_j(s)\overline{g_j(s)} d\sigma dt .
\]

In order to prove Lemma \ref{lem:den1}, we use $(b)$ of Lemma \ref{lem:Ber} and the following result appeared, for example, in \cite{S}. 
\begin{lemma}[{\cite[Theorem 6.1.16]{S}}]\label{lem:complexHilbert}
Let $H$ be a complex Hilbert space. Assume that a sequence $v_n\in H$, $n\in\mathbb{N}$ satisfies\\
$\,(i)$ the series $\sum_n  \|v_n\|^2 < \infty$;\\
$(ii)$ for any element $0\ne g \in H$ the series $\sum_n |\langle v_n, g \rangle|$ is divergent.\\
Then the set of convergent series
\[
\left\{\sum_n a_nv_n\in H:|a_n|=1\right\}
\]
is dense in $H$.
\end{lemma}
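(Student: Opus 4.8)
This is a standard fact --- in essence the Hilbert-space Steinitz rearrangement theorem, with unimodular multipliers in place of permutations --- and here is the route I would take. First, two reductions from the hypotheses: if $g\ne0$ with $g\perp v_n$ for all $n$ then $\sum_n|\langle v_n,g\rangle|=0$, violating $(ii)$, so the closed linear span of $\{v_n\}$ equals $H$; in particular $H$ is separable, and I fix an orthonormal basis $(e_k)_{k\ge1}$. Applying $(ii)$ in one direction also gives $\sum_n\|v_n\|=\infty$. By the triangle inequality it suffices to obtain the density assertion for targets $g$ in the dense set of finite linear combinations of the $e_k$, so, fixing such a $g$ and $\varepsilon>0$, I must produce $a_n\in\gamma$ with $\sum_n a_nv_n$ convergent and $\|\sum_n a_nv_n-g\|<\varepsilon$.

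The plan is to partition $\mathbb N$ into consecutive finite blocks $\mathbb N=\bigsqcup_{j\ge1}I_j$ and to choose the $a_n$ block by block. The first block $I_1=\{1,\dots,n_1\}$ acts as a sink that swallows the (finitely many) large-norm terms: picking $a_n$ $(n\le n_1)$ independently Haar on $\gamma$, the sum $S_1=\sum_{n\le n_1}a_nv_n$ satisfies $\mathbb E\|S_1\|^2\le\sum_n\|v_n\|^2$ and $\mathbb E\sum_{k>N'}|\langle S_1,e_k\rangle|^2\le\sum_{k>N'}\sum_n|\langle v_n,e_k\rangle|^2$, the latter tending to $0$ as $N'\to\infty$ because $\sum_k\sum_n|\langle v_n,e_k\rangle|^2=\sum_n\|v_n\|^2<\infty$ --- both bounds independent of $n_1$. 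Fixing $N'$ large, with $W:=\operatorname{span}(e_1,\dots,e_{N'})$ containing $g$ and with $\sum_n\|v_n-P_Wv_n\|^2$ tiny, I pick a realization for which $\|(I-P_W)S_1\|$ is small, leaving the residual target $g_1:=P_W(g-S_1)\in W$. A second group of blocks, placed far out so that the terms they use have norm tiny relative to $\dim W$, is then devoted to steering $\sum a_nv_n$ onto $g_1$ inside $W$ while keeping the out-of-$W$ error under control (the finite-dimensional step below). Finally, the remaining blocks --- placed far out and targeted at $0$ --- are handled with L\'evy's inequality for the symmetric independent elements $\xi_nv_n$ $(\xi_n$ Haar on $\gamma)$: since $\mathbb E\|\sum_{n\in I_j}\xi_nv_n\|^2=\sum_{n\in I_j}\|v_n\|^2$, one can select $a_n$ $(n\in I_j)$ with $\max_M\|\sum_{n\in I_j,\,n\le M}a_nv_n\|\le\delta_j$ and $\sum_j\delta_j$ as small as desired. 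Then $\sum_n a_nv_n$ converges and lies within $\varepsilon$ of $g$.

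The core is the finite-dimensional step: given a target $h\in W$ and a tolerance $\eta>0$, to choose a far-out interval $I$ and $a_n\in\gamma$ $(n\in I)$ so that $\sum_{n\in I}a_nv_n$ is within $\eta$ of $h$, the delicate point being that the components of the $v_n$ outside $W$ must not spoil the approximation. Inside $W$, divergence of $\sum_n|\langle v_n,\psi\rangle|$ together with compactness of the unit sphere of $W$ gives, for $I$ long enough, $\sum_{n\in I}|\langle v_n,\psi\rangle|\ge\|h\|+1$ uniformly over unit $\psi\in W$; since the orbit $\{av:a\in\gamma\}$ of a vector is the circle of radius $\|v\|$ in its complex line, the support function of $\operatorname{conv}\{\sum_{n\in I}a_nP_Wv_n:a_n\in\gamma\}$ is $\psi\mapsto\sum_{n\in I}|\langle P_Wv_n,\psi\rangle|$, so this convex hull contains $h$, and a Shapley--Folkman (Steinitz-type) correction turns this into honest coefficients with $W$-error at most $\operatorname{const}\cdot\dim W\cdot\max_{n\in I}\|v_n\|<\eta/2$. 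The remaining, and in my view principal, obstacle is to keep the accumulated out-of-$W$ error $\|\sum_{n\in I}a_n(v_n-P_Wv_n)\|$ below $\eta/2$: the coefficients produced by the Steinitz step are not free to be randomized, so one must organize the construction so that this spillover is cancelled on a subsequent far-out sub-block --- at the cost of a fresh but geometrically smaller spillover --- iterating this cancellation through the blocks. Coordinating the growing working dimensions, the block placements, the decomposition of $g$, and the geometric decay of the spillovers, all in an order free of circular dependence, so that the full series $\sum_n a_nv_n$ genuinely converges (not merely along block boundaries) to within $\varepsilon$ of $g$, is exactly where conditions $(i)$ and $(ii)$ must be played off against each other: the former supplies smallness and convergence, the latter the total variation needed to reach any target.
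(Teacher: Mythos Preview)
The paper does not prove this lemma at all: it is quoted verbatim from Steuding's book \cite[Theorem~6.1.16]{S} and used as a black box. So there is no ``paper's own proof'' to compare your attempt against; any correct argument you supply would already go beyond what the paper does.

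As for the attempt itself, it is a plausible outline but not a proof. You correctly isolate the main difficulty --- that the unimodular coefficients produced by the finite-dimensional Steinitz/Shapley--Folkman step are rigid, so the out-of-$W$ spillover $\sum_{n\in I}a_n(v_n-P_Wv_n)$ cannot simply be randomised away --- but your proposed remedy (cancel it on a later block, incurring a smaller spillover, and iterate) is asserted rather than carried out. The closing paragraph explicitly names the unresolved coordination problem (``coordinating the growing working dimensions, the block placements, the decomposition of $g$, and the geometric decay of the spillovers, all in an order free of circular dependence'') without resolving it; that is the substance of the proof, not a comment to be appended after it. There are also smaller loose ends: the first-block argument bounds only expectations, and you need a single realisation with \emph{both} $\|(I-P_W)S_1\|$ small and $\|S_1\|$ controlled, which requires at least a Markov/union-bound step; and the claim that the full series converges (not merely along block endpoints) needs the L\'evy maximal inequality applied within each block, which you mention for the tail blocks but not for the steering blocks.

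If you want a clean route, the argument in Steuding (going back to Pechersky-type rearrangement ideas, cf.\ also Bagchi's thesis) avoids the iterated-spillover bookkeeping: one shows directly that under (ii) the set of convergent series $\sum a_nv_n$ with $|a_n|=1$ is a coset of a closed subgroup whose annihilator is $\{0\}$ by (ii), hence is all of $H$; condition (i) is used to guarantee convergence of a reference series from which the coset structure is read off. That structural argument is shorter and sidesteps the block-by-block construction entirely.
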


Let $\underline{g}=(g_1,\ldots,g_m)\in {\mathbb{B}}^m$ be a non-zero element and put
$$
\underline{v_n} (s) := 
\bigl( v_n(s;\alpha,\lambda_1),\ldots, v_n (s;\alpha,\lambda_m) \bigr), \qquad
v_n (s;\alpha,\lambda_j) := \frac{e(\lambda_j n)}{(n+\alpha)^{s}}.
$$ 
Then for $\Delta_j(w) := \iint_U e^{-sw}\overline{g_j(s)}d\sigma dt$, one has
\[
\bigl\langle \underline{v_n}(s),\underline{g}(s) \bigr\rangle = 
\sum_{j=1}^m e(\lambda_j n)\Delta_j(\log(n+\alpha)).
\]
We can see that the condition (i) of Lemma \ref{lem:complexHilbert} is true since $\overline{U} \subset D$ and
$$
\bigl\langle \underline{v_n}(s),\underline{v_n}(s) \bigr\rangle = 
\sum_{j=1}^m \iint_U (n+\alpha)^{-s} \overline{(n+\alpha)^{-s}} d\sigma dt \ll
\sup_{s \in U} \Bigl| (n+\alpha)^{-2s} \Bigr|.
$$
The truth of the condition (ii) in Lemma \ref{lem:complexHilbert} easily follows from the following crucial lemma.

\begin{lemma}\label{lem:main}
Assume that $\underline{g}(s) = (g_1(s),\ldots,g_m(s))\in {\mathbb{B}}^m$ is a non-zero element and for $j=1,\ldots m$, put $\Delta_j(z) :=\iint_U e^{-sz}\overline{g_j(s)}d\sigma dt$. Then the following series
\[
\sum_{n=0}^\infty 
\bigl| e(\lambda_1 n)\Delta_1(\log(n+\alpha))+\cdots+e(\lambda_m n)\Delta_m(\log(n+\alpha)) \bigr|
\]
is divergent.
\end{lemma}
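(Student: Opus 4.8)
The plan is to argue by contradiction. Suppose that the series in the statement converges, i.e. $\sum_{n\ge 0}|a_n|<\infty$, where
\[
a_n:=\sum_{j=1}^m e(\lambda_j n)\,\Delta_j(\log(n+\alpha)) = \bigl\langle \underline{v_n}(s),\underline g(s)\bigr\rangle,
\]
and deduce that $g_j\equiv 0$ for every $j$, contradicting $\underline g\ne 0$. Two elementary facts about the functions $\Delta_j$ will be needed. First, put $\sigma_1:=\min_{s\in\overline U}\Re s$, so $1/2<\sigma_1<1$; since $\iint_U|g_j(s)|\,d\sigma dt<\infty$, for real $x\ge 0$ one has $|\Delta_j(x)|\ll e^{-\sigma_1 x}$ and, by differentiating under the integral sign, $|\Delta_j'(x)|\ll e^{-\sigma_1 x}$, the implied constants depending only on $U$ and $g_j$. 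By the mean value theorem it follows that $\Delta_j$ varies by at most $O(HM^{-1-\sigma_1})$ on an interval $[\log(M+\alpha),\log(M+H+\alpha)]$ once $M$ is large. Secondly, expanding $e^{-sz}$ in powers of $z$ shows that $\Delta_j\equiv 0$ forces $\iint_U s^k\overline{g_j(s)}\,d\sigma dt=0$ for all $k\ge 0$, whence $g_j=0$ by part $(c)$ of Lemma \ref{lem:Ber}; so it is enough to prove that each $\Delta_j$ vanishes identically.

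The core of the proof is a decoupling step that exploits the (asymptotic) orthogonality of the coefficient sequences $n\mapsto e(\lambda_j n)$. Since $\lambda_1,\ldots,\lambda_m$ are distinct modulo $1$, for $j\ne k$ the partial sums $\sum_{n=M}^{M+H-1}e((\lambda_j-\lambda_k)n)$, being geometric sums of common ratio $e(\lambda_j-\lambda_k)\ne 1$, are bounded by a constant $C^\ast$ depending only on $\lambda_1,\ldots,\lambda_m$, uniformly in $M$ and $H$; whereas for $j=k$ the sum is exactly $H$. Fix once and for all the integer $H:=\lceil 2(m-1)C^\ast\rceil+1$. For large $M$ and $1\le k\le m$ consider the short block sum $\Sigma_k(M):=\sum_{n=M}^{M+H-1}\overline{e(\lambda_k n)}\,a_n$, which satisfies $|\Sigma_k(M)|\le\sum_{n=M}^{M+H-1}|a_n|$. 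Substituting the definition of $a_n$, replacing each $\Delta_j(\log(n+\alpha))$ by $\Delta_j(\log(M+\alpha))$ at a total cost of $O(H^2M^{-1-\sigma_1})$, and isolating the diagonal term $j=k$ gives, with $D(M):=\max_{1\le j\le m}|\Delta_j(\log(M+\alpha))|$,
\[
H\,\bigl|\Delta_k(\log(M+\alpha))\bigr|\le\sum_{n=M}^{M+H-1}|a_n|+(m-1)C^\ast D(M)+O\bigl(H^2M^{-1-\sigma_1}\bigr).
\]
Taking the maximum over $k$ and moving the term $(m-1)C^\ast D(M)\le\tfrac12 HD(M)$ to the left-hand side, we obtain
\[
D(M)\ll\sum_{n=M}^{M+H-1}|a_n|+M^{-1-\sigma_1}
\]
for all sufficiently large $M$, the implied constant depending only on $m$ and the $\lambda_j$.

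Now sum this inequality over all large $M$. Each $|a_n|$ occurs in at most $H$ of the blocks $\{M,\ldots,M+H-1\}$, and $\sum_M M^{-1-\sigma_1}<\infty$ because $\sigma_1>1/2$; hence $\sum_M D(M)<\infty$, and in particular $\sum_{n\ge 0}|\Delta_j(\log(n+\alpha))|<\infty$ for every $j$. This, however, contradicts the one-dimensional fact already used in the proof of universality for a single Lerch zeta-function (see the proof of \cite[Theorem 6.1.1]{LauGa}): for a non-zero $g\in B^2(U)$ the series $\sum_{n\ge 0}\bigl|\iint_U(n+\alpha)^{-s}\overline{g(s)}\,d\sigma dt\bigr|$ diverges — in essence because an entire function of exponential type that is not identically zero cannot be summably (indeed, not even boundedly) small along the sequence $\{\log(n+\alpha)\}_{n\ge 0}$, whose counting function grows exponentially, without violating Jensen's formula. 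Applying this with $g=g_j$ for any $j$ with $g_j\ne 0$ gives the required contradiction, so $g_j\equiv 0$ for all $j$, and the lemma follows.

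I expect the decoupling step to be the only genuine difficulty. Because the ``coefficients'' $\Delta_j(\log(n+\alpha))$ themselves vary with $n$, the orthogonality cannot be applied through a single global Cesàro average; one must localise to blocks short enough that the $\Delta_j$ are essentially constant — which is exactly where the decay $\Delta_j'(x)\ll e^{-\sigma_1 x}$ is used — yet of a fixed length $H$ large enough, in terms of $\min_{j\ne k}|1-e(\lambda_j-\lambda_k)|$, that the diagonal contribution dominates the off-diagonal ones; and the summability of the resulting error $\sum_M M^{-1-\sigma_1}$ is precisely what the hypothesis $\overline U\subset D$ guarantees.
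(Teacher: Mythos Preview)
Your argument is correct and takes a genuinely different route from the paper's. The paper proves the divergence directly and quantitatively: using an auxiliary result (Lemma~\ref{cor:Delta}, imported from \cite{LeeNakPanS}) that locates, for each large $x=x_k$, a subinterval $I_m\subset[x,x+1]$ of length $\gg x^{-2m}$ on which every $|\Delta_j|$ is pinned near a reference value $|\Delta_j(x_0^{(j)})|$, it expands the squared block sum $S(x)=S_1(x)+S_2(x)$ over $n$ with $\log(n+\alpha)\in I_m$, bounds the off-diagonal part $S_2(x)\ll e^{-2\sigma_1 x}$ by Abel summation, and after dividing through obtains ${\sum_n}^\ast |a_n|\gg e^{x(1-\sigma_2)}/x^{2m}$. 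You instead argue by contradiction and reduce to the scalar case: you localise to blocks of a \emph{fixed} length $H$, chosen once in terms of $\min_{j\ne k}|1-e(\lambda_j-\lambda_k)|$ so that the diagonal term in the finite geometric sum dominates the off-diagonal ones, and then sum the resulting inequalities over all $M$. This is more elementary---it bypasses Lemma~\ref{cor:Delta} entirely and needs only the trivial Lipschitz bound $\Delta_j'(x)\ll e^{-\sigma_1 x}$---and cleanly reduces the joint statement to the well-known single-function divergence from \cite{LauGa}. What you lose is the explicit quantitative lower bound on block sums that the paper's method yields; what you gain is a shorter argument that transparently separates the orthogonality input (bounded off-diagonal partial sums) from the analytic input (the $m=1$ divergence).
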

In order to prove the lemma above, we quote the following. 
\begin{lemma}[{\cite[Corollary 2.7]{LeeNakPanS}}]\label{cor:Delta}
Let $\|g_j \| \ne 0$ for $1 \le j \le m$. Then for every $A>0$ and every $x>1$, there exist sequences $B_1>\cdots>B_m>0$, $x^{(0)}_0=x, x^{(1)}_0,\ldots, x^{(m)}_0$ and intervals $I_j\subset[x,x+1]$ of length $|I_j|\geq B_jx^{-2j}$ such that $x^{(j)}_0\in I_j$, $I_{j+1}\subset I_j$, and for all $t\in I_{j}$ we have
\begin{equation}\label{eq:delxj}
\begin{split}
&\frac{1}{2} \bigl|\Delta_j(x^{(j-1)}_0)\bigr|+O\left(e^{-Ax}\right)\leq 
\frac{1}{2} \bigl|\Delta_j(x^{(j)}_0)\bigr|+O\left(e^{-Ax}\right) \\ &\leq 
\bigl|\Delta_j(t)\bigr|\leq \bigl|\Delta_j(x^{(j)}_0)\bigr|+O\left(e^{-Ax}\right).
\end{split}
\end{equation}
\end{lemma}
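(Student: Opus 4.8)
The plan is to deduce \eqref{eq:delxj} from two ingredients: a few elementary properties of the entire functions $\Delta_j$, and a ``flatness on short intervals'' statement for one $\Delta_j$ at a time, which is then applied $m$ times to manufacture the nested intervals $I_1\supset\cdots\supset I_m$. First I would record the properties of $\Delta_j$. Since $U$ is bounded, $\Delta_j(z)=\iint_U e^{-sz}\overline{g_j(s)}\,d\sigma\,dt$ is entire; expanding $e^{-sz}$ in powers of $z$ and integrating termwise, $\Delta_j\equiv 0$ would force $\iint_U s^k\overline{g_j(s)}\,d\sigma\,dt=0$ for all $k\ge 0$, i.e.\ $g_j$ orthogonal to every polynomial, whence $g_j=0$ in $B^2(U)$ by Lemma~\ref{lem:Ber}$(c)$, contradicting $\|g_j\|\ne 0$; thus $\Delta_j\not\equiv 0$. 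Setting $\sigma_0:=\min_{s\in\overline U}\Re s\in(1/2,1)$, one checks directly that $|\Delta_j(z)|\ll e^{-\sigma_0\Re z}$ for $\Re z\ge 1$ and that the renormalisation $F_j(z):=e^{\sigma_0 z}\Delta_j(z)=\iint_U e^{-(s-\sigma_0)z}\overline{g_j(s)}\,d\sigma\,dt$ is entire, bounded on $\{\Re z\ge 1,\ |\Im z|\le 2\}$ with $|F_j|\ll\|g_j\|$, and, differentiating under the integral, $|F_j'|\ll\|g_j\|$ there; in particular $F_j$ is bounded and Lipschitz on $[1,\infty)$ with constants depending only on $U$ and $\|g_j\|$.

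Next I would prove the one-function statement: for fixed $j$, $A>0$, $x>1$ and any subinterval $J\subseteq[x,x+1]$, there are a subinterval $J^\ast\subseteq J$ with $|J^\ast|\gg_A |J|\,x^{-2}$ and a point $\xi\in J^\ast$ so that $|\Delta_j|$ varies by a factor at most $2$ on $J^\ast$ up to an additive $O(e^{-Ax})$, with $\xi=\operatorname{argmax}_{J^\ast}|\Delta_j|$, and with $J^\ast$ chosen to contain $\operatorname{argmax}_J|\Delta_j|$ in the non-negligible case. If $\max_J|\Delta_j|\le C_\ast e^{-Ax}$ with $C_\ast$ large, this is trivial: take any $J^\ast\subseteq J$ of the right length, and note that the lower bound $\tfrac12|\Delta_j(\xi)|-O(e^{-Ax})$ is then non-positive. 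Otherwise $\mu:=\max_J|F_j|\gg e^{(\sigma_0-A)x}$, so by Jensen's formula for $F_j$ on a disc of radius $2$ centred at a point of $J$ where $|F_j|=\mu$, the number $N$ of zeros of $F_j$ within distance $1$ of $J$ is $\ll\log(1/\mu)\ll_A x$; deleting the $(|J|/x)$-neighbourhoods of those $N$ zeros leaves a subinterval $J'\subseteq J$ with $|J'|\gg|J|/x$ on which $F_j$ is zero-free in a complex $(|J|/x)$-neighbourhood, and a Borel--Carath\'eodory estimate for a branch of $\log F_j$ there, with parameter $\log(1/\mu)\ll_A x$, produces $J^\ast\subseteq J'$ with $|J^\ast|\gg (|J|/x)/\log(1/\mu)\gg_A |J|\,x^{-2}$ on which $|F_j|$, hence $|\Delta_j|$ (as $e^{-\sigma_0 z}$ is essentially constant on so short an interval), varies by a factor at most $2$; centring the construction at $\operatorname{argmax}_J|\Delta_j|$ when $\mu$ is large keeps that point inside $J^\ast$.

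Finally I would iterate. Put $I_0=[x,x+1]$, $x_0^{(0)}=x$, and given $I_{j-1}$ and $x_0^{(j-1)}\in I_{j-1}$, apply the one-function statement with $J=I_{j-1}$ to get $I_j:=J^\ast\subseteq I_{j-1}$ of length $\ge B_j x^{-2j}$, where $B_j:=cB_{j-1}$ for the implied constant $c<1$, and set $x_0^{(j)}:=\operatorname{argmax}_{I_j}|\Delta_j|$. The outer two inequalities of \eqref{eq:delxj} are then exactly the flatness of $|\Delta_j|$ on $I_j\subseteq I_{j-1}$. For the first inequality: in the non-negligible case $I_j$ contains $\operatorname{argmax}_{I_{j-1}}|\Delta_j|$, so $|\Delta_j(x_0^{(j)})|=\max_{I_j}|\Delta_j|\ge\max_{I_{j-1}}|\Delta_j|\ge|\Delta_j(x_0^{(j-1)})|$; in the negligible case $|\Delta_j(x_0^{(j-1)})|\le\max_{I_{j-1}}|\Delta_j|=O(e^{-Ax})$, so the inequality is automatic. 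Since $0<c<1$ we obtain $B_1>\cdots>B_m>0$, which completes the proof.

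The crux is the one-function statement, specifically the loss of only a polynomial factor $x^{-2}$ — rather than something exponentially small in $x$ — in the length of the flat subinterval: this exploits precisely that $F_j=e^{\sigma_0 z}\Delta_j$ is entire and bounded near $[x,x+1]$, so Jensen's formula caps its zeros there by $O_A(x)$ as soon as $\Delta_j$ is not already of size $O(e^{-Ax})$, after which a modulus–continuity (Borel--Carath\'eodory) estimate costs at most one further factor $x$. The other delicate point is the bookkeeping: one must carry the maximum point of $\Delta_j$ into $I_j$ so that the linking inequality of \eqref{eq:delxj} survives the nesting, and this is arranged by the case split according to whether $\max_{I_{j-1}}|\Delta_j|$ lies below or above the negligible scale $e^{-Ax}$, together with the freedom to centre the Jensen--Borel--Carath\'eodory construction at $\operatorname{argmax}_{I_{j-1}}|\Delta_j|$.
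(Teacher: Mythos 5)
The paper itself gives no proof of this statement (it is quoted verbatim from \cite[Corollary 2.7]{LeeNakPanS}), so I can only judge your reconstruction on its own terms. Its architecture --- renormalise to $F_j=e^{\sigma_0 z}\Delta_j$, dispose of the case $\max|\Delta_j|\ll e^{-Ax}$ trivially, otherwise bound the number of zeros near $[x,x+1]$ by $O_A(x)$ via Jensen, pass to a zero-free gap, and apply Borel--Carath\'eodory there to get factor-$2$ flatness with a per-step length loss of $x^{-2}$, iterated $m$ times --- is sensible and does reproduce the quantitative shape $|I_j|\geq B_jx^{-2j}$. The problem is the one step you dismiss in a subordinate clause: ``centring the construction at $\operatorname{argmax}_J|\Delta_j|$ when $\mu$ is large keeps that point inside $J^\ast$.'' Your construction deletes $(|J|/x)$-neighbourhoods of the $O_A(x)$ zeros and takes a surviving gap; nothing in it guarantees that the maximum point survives the deletion. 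Indeed, the only generic separation between the argmax $\xi$ and a zero of $F_j$ that your estimates give is $\gg \mu/M\gg e^{-cx}$ (Schwarz-type bound), which is exponentially smaller than the $|J|x^{-1}$-scale neighbourhoods you remove, so the zero-free gap you produce may lie far from $\xi$, and on it $\max|\Delta_j|$ may a priori be a small fraction of $\max_{I_{j-1}}|\Delta_j|$. Consequently the first inequality in (\ref{eq:delxj}), i.e.\ $|\Delta_j(x_0^{(j-1)})|\leq|\Delta_j(x_0^{(j)})|+O(e^{-Ax})$, is not established; this is not cosmetic, since that linking inequality (with $j=1$, $x_0^{(0)}=x$) is exactly what the proof of Lemma \ref{lem:main} uses to transport the Kaczorowski--Kulas bound $|\Delta_1(x)|\gg e^{-\sigma_2 x}$ into the final interval $I_m$.

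What is missing is an argument showing that the argmax automatically repels zeros to a \emph{polynomial} distance, and this needs an input your sketch does not contain: one must exploit that $|F_j|\leq|F_j(\xi)|$ along the whole real interval $J$, e.g.\ via the two-constants theorem (harmonic measure of $J$ inside a disc) to upgrade the crude bound $|F_j|\leq M$ to $|F_j|\ll|F_j(\xi)|$ on a complex neighbourhood of $\xi$ of radius $\asymp|J|(\log(M/\mu))^{-2}\asymp|J|x^{-2}$, after which Jensen gives only $O(1)$ zeros there and the argmax property itself forces them out of a disc of comparable radius (a zero much closer than that would make $|F_j|$ overshoot $|F_j(\xi)|$ on $J$ just beyond it). Only then can Borel--Carath\'eodory be applied \emph{around} $\xi$, yielding a flat subinterval that genuinely contains the maximum point and hence the first inequality of (\ref{eq:delxj}); note also that the end-of-interval case (argmax at an endpoint of $I_{j-1}$, where harmonic measure degrades to a square root) is what accounts for the $x^{-2}$ rather than $x^{-1}$ loss. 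As written, your proof establishes the outer two inequalities of (\ref{eq:delxj}) on some subinterval, but not the chain that links $x_0^{(j-1)}$ to $x_0^{(j)}$, so the lemma is not proved.
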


\begin{proof}[Proof of Lemma \ref{lem:main}]
Without loss of generality, we can assume that $g_1$ is a non-zero element since $\|\underline{g} \| \ne 0$ implies that at least one of $g_j$'s is a non-zero element.

Obviously, $\Delta_1(z)\ll e^{C|z|}$ for some positive constant $C$ depending on $U$. Let $\sigma_1$ and $\sigma_2$ be real numbers with $1/2 < \sigma_1 < \sigma_2 < 1$ such that the vertical strip $\sigma_1 < \Re s < \sigma_2$ contains the simply connected smooth Jordan domain $U$. Then for sufficiently small $\eta=\eta(U)>0$ and for all complex $z$ with $|\arg(-z)|\leq \eta$ we have $|e^{\sigma_2 z}\Delta_1(z)|\ll 1$ form the definitions of $U$ and $\sigma_2$. Furthermore, $\Delta_1$ is not identically zero since otherwise for every $k \in {\mathbb{N}}$ we have 
$$
0 = \Delta_1^{(k)}(0) = \iint_U (-s)^k \overline{g_1(s)}d\sigma dt, 
$$
which implies that $g_1$ is orthogonal to all polynomials in $B^2(U)$, however, it contradicts to $(c)$ of Lemma \ref{lem:Ber} and the assumption that $\|g_1\| \ne 0$. Hence, by using \cite[Lemma 3]{KK}, we can find a real sequence $x_k$ tending to infinity such that
\[
|\Delta_1(x_k)|\gg e^{-\sigma_2 x_k}.
\]

Fix $k$ and put $x=x_k$. Hence, by using Corollary \ref{cor:Delta}, we can see that for every $A>0$ and $x=x_k$, there exist sequences $B_1>\cdots>B_m>0$, $x^{(0)}_0=x, x^{(1)}_0,\ldots, x^{(m)}_0$ and intervals $I_j\subset[x,x+1]$ of length $|I_j|\geq B_j x^{-2j}$ such that $x^{(j)}_0\in I_j$, $I_{j+1}\subset I_j$, and for all $t\in I_{j}$, the inequalities (\ref{eq:delxj}) holds. Now let $I_m := [y,y+B_my^{-2m}]\subset[x,x+1]$. Since $I_m \subset I_j$ for every $j=1,2,\ldots,m$, the inequalities (\ref{eq:delxj}) holds also for all $t\in I_m$. In particular, since $x^{(0)}_0 = x$, for $t\in I_m$ one has
\begin{equation}\label{eq:lower1}
\bigl|\Delta_1(t)\bigr| \geq \frac{1}{2} \bigl|\Delta_1(x^{(0)}_0)\bigr|\gg e^{-\sigma_2 x}.
\end{equation}
Moreover, for every $j=1,2,\ldots,m$ we have
\begin{equation}\label{eq:upper1}
\bigl|\Delta_j(t)\bigr|\ll e^{-\sigma_1 x}, \qquad t\in [x,x+1].
\end{equation}

We denote by ${\sum_n}^*$ the sum over integers $n+\alpha \in [e^y, e^{y+B_my^{-2m}}]$ in order to obtain $\log(n+\alpha)\in I_m$.

First we consider the following sum
$$
S_1 (x) := {\sum_n}^* \sum_{j=1}^m \bigl|\Delta_j(\log(n+\alpha))\bigr|^2 .
$$
Obviously, it holds that
$$
e^{y+y^{-2m}} - e^y = e^y \bigl( e^{y^{-2m}} -1 \bigr) = \frac{e^y}{y^{2m}} \sum_{n=0}^\infty y^{-2mn}
\gg \frac{e^y}{y^{2m}} .
$$
Let $A>0$ be sufficiently large. Then by using (\ref{eq:delxj}), (\ref{eq:lower1}), $x \le y \le x+1$ and the formula above, we have
\begin{align*}
S_1(x) 
&\gg {\sum_n}^* \sum_{j=1}^m\left( \bigl|\Delta_j(x^{j}_0)\bigr|^2 
+ \bigl|\Delta_j(x^{j}_0)\bigr|O(e^{-Ax}) + O(e^{-2Ax})\right)\\
&\gg {\sum_n}^* \sum_{j=1}^m \Bigl( \bigl|\Delta_j(x^{j}_0)\bigl|^2 + O(e^{-Ax}) \Bigr)
\gg {\sum_n}^* \Biggl( \sum_{j=1}^m \bigl|\Delta_j(x^{j}_0)\bigr| \Biggl)^2 \\
&\gg {\sum_n}^* e^{-\sigma_2 x}\sum_{j=1}^m \bigl| \Delta_j(x^{j}_0) \bigr| 
\gg \frac{e^{x(1-\sigma_2)}}{x^{2m}}\sum_{j=1}^m \bigl|\Delta_j(x^{j}_0)\bigr| .
\end{align*}

Since $\lambda_k \ne\lambda_l$ for $k \ne l$ from the assumption of Theorem \ref{th:main}, then it is easy to prove that for any $1\leq k \ne l \le m$, one has
\[
\phi_{k,l}(t):=\sum_{n\leq t} e((\lambda_k-\lambda_l)n) \ll 1.
\]
Similarly to (\ref{eq:upper1}), one can easily get the estimation
\[
\frac{d}{du}\Delta_j(\log u) = \frac{1}{u}\Delta'_j(\log u) \ll u^{-1-\sigma_1} .
\]
From $\overline{\Delta_j(\log u)}=\overline{\langle u^{-s},g_j(s)\rangle} = \langle u^{-\overline{s}},\overline{g_j(s)}\rangle$, we obtain
\begin{align*}
\frac{d}{du}\overline{\Delta_j(\log u)} = \frac{1}{u}\iint_U -\overline{s}u^{-\overline{s}} 
g_j(s) d\sigma dt = \frac{1}{u}\overline{\Delta'_j(\log u)}\ll u^{-1-\sigma_1}.
\end{align*}
Hence, using partial summation, we have
\begin{align*}
&\sum_{X_1\leq n\leq X_2} \sum_{1\le k \ne l \le m} 
e((\lambda_k-\lambda_l)n)\Delta_k(\log(n+\alpha))\overline{\Delta_l(\log(n+\alpha))}\\
&\qquad = \sum_{1\le k \ne l \le m} \int_{X_1}^{X_2}
\Delta_k(\log(u+\alpha))\overline{\Delta_l(\log(u+\alpha))}d\phi_{k,l}(u)\\
&\qquad \ll X_1^{-2\sigma_1} + \sum_{1\le k \ne l \le m} \int_{X_1}^{X_2}
\left|\left(\Delta_k(\log(u+\alpha))\overline{\Delta_l(\log(u+\alpha))}\right)'\right|du\\
&\qquad \ll X_1^{-2\sigma_1}+\int_{X_1}^{X_2}\frac{du}{u^{1+2\sigma_1}}\ll X_1^{-2\sigma_1}
\end{align*}
for sufficiently large $X_2 > X_1 >0$. Thus we obtain
\begin{align*}
S_2(x) & := \sum_{1\leq k\ne l\leq m}{\sum_n}^* 
e((\lambda_l-\lambda_k)n)\Delta_k(\log(n+\alpha))\overline{\Delta_l(\log(n+\alpha))} 
\ll e^{-2\sigma_1 x}.
\end{align*}
We can easily see that
\begin{align*}
S(x) &:={\sum_n}^* \bigl|e(\lambda_1 n)\Delta_1(\log(n+\alpha))+\cdots+e(\lambda_m n)\Delta_m(\log(n+\alpha))\bigr|^2\\
&=S_1(x) + S_2 (x) \gg \frac{e^{x(1-\sigma_2)}}{x^{2m}}\sum_{j=1}^m \bigl|\Delta_j(x^{j}_0)\bigr| + O\left(e^{-2\sigma_1 x}\right)
\end{align*}
when $A$ is sufficiently large. On the other hand, one has
\begin{align*}
S(x)&\ll {\sum_n}^* \Biggl| \sum_{j=1}^m e(\lambda_j n)\Delta_j(\log(n+\alpha)) \Biggr|
\sum_{j=1}^m \bigl| \Delta_j(\log(n+\alpha)) \bigr|\\
&\ll {\sum_n}^* \Biggl| \sum_{j=1}^m e(\lambda_j n) \Delta_j(\log(n+\alpha)) \Biggr|
\sum_{j=1}^m \bigl|\Delta_j(x^{(j)}_0)\bigr|+O(e^{-(A+\sigma_1-1)x}).
\end{align*}
Hence, dividing the last inequalities by $\sum_{j=1}^m |\Delta_j(x^{(j)}_0)|$, we have
\[
{\sum_n}^* \Biggl| \sum_{j=1}^m e(\lambda_j n)\Delta_j(\log(n+\alpha)) \Biggr| 
\gg \frac{e^{x(1-\sigma_2)}}{x^{2m}},
\]
since $2\sigma_1-\sigma_2>0$. Thus, the last inequality implies Lemma \ref{lem:main}.
\end{proof}

\begin{proof}[Proof of Lemma \ref{lem:den1}]
We put
\begin{align*}
&v_n (s,\omega(n);\alpha,\lambda_j) := \frac{e(\lambda_j n)\omega(n)}{(n+\alpha)^{s}},\qquad 
\omega(n) \in \gamma,\\
&\underline{v_n} (s,\omega(n)) := 
\bigl( v_n(s,\omega(n);\alpha,\lambda_1),\ldots, v_n (s,\omega(n);\alpha,\lambda_m) \bigr).
\end{align*}
Recall $U$ be a simply connected smooth Jordan domain such that $\overline{U}\subset D$. Then the set of convergent series
\[
\biggl\{ \sum_n \underline{v_n} (s,\omega(n)):\omega\in\Omega \biggr\}
\]
is dense in the space ${\mathbb{B}}^m$ by Lemmas \ref{lem:complexHilbert} and \ref{lem:main}. Now we extend this result to the space $H(U)^m$. Let $\varepsilon >0$ and $(h_1 (s), \ldots , h_m(s)) \in H(U)^m$. From Lemma \ref{lem:main} there exists a sequence ${\beta(n) \in \gamma}$ such that $\sum_{n=0}^\infty \underline{v_n} (s,\beta(n))$ converges on $U$ in the topology of ${\mathbb{B}}^m$. This convergence is uniform on every compact subsets $\mathcal{K}_1, \ldots , \mathcal{K}_m \subset U$ by $(a)$ of Lemma \ref{lem:Ber} (see also \cite[Lemma 7]{MLerch}). Thus we can find $b(n) \in \gamma$ and $M \in {\mathbb{N}}$ satisfying
\begin{align*}
&\max_{1 \le j \le m} \max_{s \in \mathcal{K}_j} \Biggl| \sum_{n=0}^M 
v_n(s,b(n);\alpha,\lambda_j) - h_j (s) \Biggr| < \frac{\varepsilon}{2}, \\
&\max_{1 \le j \le m} \max_{s \in \mathcal{K}_j} \Biggl| \sum_{n>M} 
v_n(s,b(n);\alpha,\lambda_j) \Biggr| < \frac{\varepsilon}{2}
\end{align*}
from $(a)$ of Lemma \ref{lem:Ber} and Lemma \ref{lem:main}. The inequality above and the assumption $\overline{U} \subset D$ implies Lemma \ref{lem:den1}. 
\end{proof}

\bibliographystyle{ams}

\end{document}